\theoremstyle{plain}
\newtheorem{theorem}{Theorem}
\newtheorem{lemma}{Lemma}
\newtheorem{proposition}{Proposition}
\theoremstyle{definition}
\newtheorem{remark}{Remark}
\newtheorem{definition}{Definition}
\newtheorem{example}{Example}
\newcommand{\real}{\mathds{R}}
\newcommand{\rn}{{\mathds{R}^n}}
\newcommand{\Cc}{\mathds{C}}
\newcommand{\I}{\mathds{1}}
\newcommand{\Sss}{\mathbb{S}}
\newcommand{\LL}{\mathcal{L}}
\newcommand{\RR}{\mathrm{Re}\,}
\numberwithin{equation}{section}
\begin{document}


\title{Compound kernel estimates for the transition probability density of a L\'evy process in $\rn$}

\author{%
    \textsc{Victoria Knopova}%
    \thanks{  V.M.\ Glushkov Institute of Cybernetics,
            NAS of Ukraine,
            40, Acad.\ Glushkov Ave.,
            03187, Kiev, Ukraine,
            \texttt{vic\underline{ }knopova@gmx.de}}}

\date{ }
\maketitle

\begin{abstract}

    We construct in the small-time setting  the upper and lower estimates for the transition probability density  of  a L\'evy process in $\rn$. Our approach relies on the complex analysis technique and the asymptotic analysis of the inverse Fourier transform of the characteristic function  of the respective process.

\medskip\noindent
    \emph{Keywords:} transition probability density, transition density estimates, L\'evy processes, Laplace method.
    
    \medskip\noindent  \emph{MSC 2010:}   Primary: 60G51. Secondary: 60J75; 41A60.
\end{abstract}

\maketitle
\numberwithin{equation}{section}

\section{Introduction}

Let   $Z_t$ be  a real-valued L\'evy process in $\rn$ with characteristic exponent $\psi$, i.e.
$$
Ee^{i \xi \cdot Z_t} = e^{-t\psi(\xi)},\quad \xi\in \rn.
$$
It is known that the characteristic exponent  $\psi$ admits the L\'evy-Khinchin representation
\begin{equation}
    \psi(\xi)=ia\cdot \xi - \frac{1}{2}  \xi \cdot  Q \xi +\int_\rn (1-e^{i\xi\cdot  u}+i\xi\cdot  u\I_{\|u\|<1})\mu(du), \label{psi}
    \end{equation}
where $a\in \rn$, $Q$ is a positive  semi-definite $n \times n$ matrix,  and $\mu$ is a L\'evy measure, i.e. $\int_\rn(1\wedge \|u\|^2)\mu(du)<\infty$.  In what follows we assume that $Q\equiv 0$, and
\begin{equation}\label{mu}
\mu(\rn)=\infty.
\end{equation}
Clearly, \eqref{mu}   is necessary for $Z_t$ to possess a distribution density.

 In the past decades such questions as the existence and  properties of the transition probability density of L\'evy and, more generally, Markov processes,  attracted a lot of attention. Although some progress is already achieved, this problem is highly non-trivial.   One  can  prove the existence of the transition probability density of a symmetric Markov process and study its properties by applying  the Dirichlet form technique, see  \cite{BBCK09},
 \cite{CKK11}, \cite{C09},
 \cite{BGK09},
\cite{CK03},  \cite{CK08},  \cite{CKK09}. The other approach relies  on versions of the   Malliavin calculus for jump processes, see  \cite{Le87}, \cite{Ish94}--\cite{Ish01},  \cite{Pi96}--\cite{Pi97b}, and provides the pointwise small-time asymptotic of the transition probability density of a Markov process which is a solution to a L\'evy-driven SDE.   Under certain assumptions on the L\'evy measure estimates on the transition probability density are obtained in \cite{KS13a}-\cite{KS13b}, see also  the references therein  for earlier results.  In \cite{KK12a}, which is the one-dimensional predecessor  of the current paper, we investigated the transition probability density  $p_t(x)$ of a L\'evy process, and proposed  a specific form of estimates, which we call the \emph{compound kernel estimates}, see Definition~\ref{def1} below. The approach described in \cite{KK12a} relies on the asymptotic analysis of the inverse Fourier transform of the respective characteristic function.  The analysis made in \cite{KK12a}  shows that under rather general  assumptions   the \emph{bell-like estimate}
\begin{equation}
p_t(x)\leq \sigma_t g(\|x\|\sigma_t)\label{bell}
\end{equation}
 where $ g\in L_1(\rn)$, and $\sigma_t$ is some "scaling function",  is not possible. We also point out, that in the case of a L\'evy process the results obtained in  \cite{Pi96}--\cite{Pi97b} and \cite{Ish01} fit in our  observation.  At the same time, the  upper  and lower compound kernel estimates   give an adequate picture of behaviour of the transition probability density. In \cite{KK13a}, \cite{KK13b}  we investigate possible applications  of the compound kernel estimates for the construction of the transition probability density of some class of Markov processes.

In this paper we investigate the transition probability density of a L\'evy process in the  multi-dimensional  setting.
 In Section~\ref{set} we set the notation and formulate  our main result  Theorem~\ref{t-main1}. Section~\ref{proofs} is devoted to the proof of Theorem~\ref{t-main1}. In Section~\ref{subb}, Theorems~\ref{Omey} and \ref{den}, we treat the particular cases in which  it is possible to construct a bell-like estimate \eqref{bell}. In Section~\ref{exam} we illustrate our results by examples. As  already  mentioned, even if one can construct an estimate of the form \eqref{bell}, it may prove to be not informative.   In particular, in Example~\ref{exa2} we consider the discretized analogue of an $\alpha$-stable L\'evy measure, and show that in the multi-dimensional setting the bell-like estimate for the respective transition probability density, which is  given by Theorem~\ref{Omey}, is not integrable in $x$. At the same time, the compound kernel estimate provided by Theorem~\ref{t-main1} gives an adequate answer.

\section{Settings and the main result}\label{set}

\textbf{Notation:} We denote by $\Sss^n$ a unit sphere in $\rn$;
$\xi\cdot \eta$ and  $\|\xi\|$ denote, respectively, the scalar
product of $\xi,\eta\in \rn$ and the Euclidean norm  of $\xi$ in
$\rn$. We write $f\asymp g$ if there exist constants $c_1,c_2>0$
such that $c_1 f(x)\leq g(x)\leq c_2f(x)$ for all $x\in \real$;
$a\wedge b :=\min(a,b)$.

\medskip

To formulate the regularity assumption on the characteristic exponent $\psi$ we  introduce some auxiliary functions.
For $x\in \real$ put
 \begin{equation}\label{UL}
L(x):=x^2\I_{\{|x|<1\}}, \quad U(x):=x^2\wedge 1,
\end{equation}
and define for $\xi\in \rn$ the functions
\begin{equation}\label{psiUL}
\begin{split}
\psi^L(\xi)&:= \int_\rn L(\xi\cdot u)\mu(du)=\int_{ |(\xi\cdot u) |\leq 1} (\xi \cdot u)^2 \mu(du),
\\
\psi^U(\xi):&=\int_{\rn}U(\xi \cdot u)\mu(du)=\int_{\rn} \left((\xi\cdot  u)^2\wedge 1\right)\mu(du).
\end{split}
\end{equation}
Observe that we always have
\begin{equation}
(1-\cos1) \psi^L(\xi)\leq \RR \psi(\xi)\leq 2 \psi^U(\xi).\label{eq1}
\end{equation}

In addition, we assume that  functions $\psi^L$ and $\psi^U$ are comparable, i.e. the assumption below holds  true.

\medskip

\textbf{A.} There exists $\beta>1$ such that $\sup_{l\in \Sss^n}\psi^U(r l)\leq \beta \inf_{l\in \Sss^n} \psi^L(r l)$ for all $r$  large enough.

\medskip

In particular, assumption  \textbf{A} implies the existence of the transition probability density for $Z_t$, see Lemma~\ref{growth}  in Section~\ref{proofs}.

Define
$$
\psi^* (r) := \sup_{l\in \Sss^n}\psi^U(r l),
$$
 and
\begin{equation}
\rho_t:= \inf\{ r:\quad \psi^*(r) =1/t\}.\label{rho1}
\end{equation}

We decompose  $Z_t$ into a sum
\begin{equation}\label{decomp}
Z_t=\bar Z_t+\hat Z_t-a_t,
\end{equation}
where

$\bullet$    $a_t\in \rn$ is a vector with coordinates
\begin{equation}
(a_t)_i =t\left(a_i+\int_{1/\rho_t<\|u\|<1}u_i\, \mu(du)\right),\label{ait}
\end{equation}
where the vector  $a\in \rn$  is that from representation  \eqref{psi}, and $\rho_t$ is defined in \eqref{rho1};

$\bullet$ for each $t>0$ the random variables  $\bar Z_t$ and $\hat
Z_t$ are independent; the  variable  $\bar Z_t$ is infinitely
divisible for each $t>0$, with respective characteristic exponent
\begin{equation}
    \psi_t(\xi):=t\int_{\rho_t \|u\|\leq 1} (1-e^{i\xi \cdot u}+i\xi \cdot u)\mu(du), \label{psit}
    \end{equation}
and  $\hat Z_t$ admits   for each $t>0$ the compound Poisson
distribution with the intensity measure
\begin{equation}
\Lambda_t(du):=t  \mu(du)1_{\{ \rho_t \|u\|>1\}}. \label{lam}
\end{equation}
 If  condition \textbf{A} is satisfied, then $\bar Z_t$ possesses a distribution density (see Lemma~\ref{le2} below), which we denote by $\bar p_t(x)$. Therefore, we can represent  $p_t(x)$ as
\begin{equation}
p_t(x)=(\bar p_t* P_t*\delta_{-a_t})(x),\label{conv1}
\end{equation}
where
\begin{equation}
    P_t(dy):=e^{-\Lambda_t(\rn)} \sum_{m=0}^\infty \frac{1}{m!} \Lambda_t^{*m}(dy),\label{Poist}
    \end{equation}
 and  $\Lambda_t^{*m}$ denotes the $m$-fold convolution of the measure $\Lambda_t$; by  $\Lambda_t^{*0}$ we understand  the $\delta$-measure at $0$.

We are looking for a specific  form of the estimate for $p_t(x)$, called the \emph{compound kernel estimate}, see the definition below.

\begin{definition}\label{def1}
Let $\sigma, \zeta: (0, \infty)\to \real$, $h:\rn\to \real$ be some functions, and $(Q_t)_{t\geq 0}$ be a family of finite measures on the Borel $\sigma$-algebra in $\rn$.  We say that a real-valued function $g$ defined on a set $A\subset (0, \infty)\times \rn$ satisfies the \emph{upper compound kernel estimate} with parameters $(\sigma_t, h, \zeta_t, Q_t)$, if
\begin{equation}\label{upperCCE}
g_t(x)\leq\sum_{m=0}{1\over m!}\int_{\rn}\sigma_t h((x-y)\zeta_t)Q_t^{*m}(dy), \quad (t,x)\in A.
\end{equation}
 If the analogue of \eqref{upperCCE} holds true with the sign $\geq$ instead of $\leq$, then we say that the function $g$ satisfies the \emph{lower  compound kernel estimate} with parameters $(\sigma_t, h, \zeta_t, Q_t)$.
\end{definition}
Let us put a  lexicographical order on $\rn$; namely, we say that $x\leq y$, $x=(x_1,\ldots, x_n)$, $y=(y_1,\ldots, y_n)\in \rn$, if there exists $1\leq m\leq n$, such that for all $i<m$ either $x_i=y_i$, or $x_i<y_i$.
Introducing such an order, we can define in the lexicographical sense the first  argument of maximum $x_t$ of the function   $\bar p_t(x)$. Below we show that
 $x_t$ indeed exists, and for every $t_0>0$ there exists $L=L(t_0)$ such that
$$
\|x_t\|\leq L/\rho_t\quad  t\in (0, t_0].
$$
Below we present   our main result on the behaviour of the transition probability density of a L\'evy process in $\rn$.
 \begin{theorem}\label{t-main1} Suppose that condition \textbf{A}   is satisfied.  Then for every $t_0>0$ there exist constants $b_i>0$, $i=1\ldots 4$,  such that the statements below hold true.

 \begin{itemize}\item[I.]    The function
$$
 p_t(x+a_t), \quad (t, x)\in (0, t_0]\times \rn,
$$
satisfies the upper compound kernel estimate with parameters $(\rho_t^n, f_{upper}, \rho_t, \Lambda_t)$, where
\begin{equation}
f_{upper}(x)=b_1 e^{-b_2\|x\|}.\label{fup}
\end{equation}

\item[II.]   The function
$$
 p_t(x+a_t-x_t), \quad (t, x)\in (0, t_0]\times \rn,
$$
satisfies the lower compound kernel estimate with parameters $(\rho_t^n, f_{lower}, \rho_t, \Lambda_t)$, where
\begin{equation}
f_{lower}(x)=b_3 \I_{\|x\|\leq b_4}.\label{flower}
\end{equation}

 \end{itemize}
\end{theorem}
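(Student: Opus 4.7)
The plan is to combine the convolution representation \eqref{conv1} and the series \eqref{Poist} to reduce both assertions of the theorem to matching pointwise bounds on the truncated density $\bar p_t$. A preliminary spherical averaging shows that $\mu(\|u\|>1/\rho_t)$ is comparable to $\psi^*(\rho_t)=1/t$, so $\Lambda_t(\rn)$ is bounded on $(0,t_0]$ and the factor $e^{-\Lambda_t(\rn)}$ in \eqref{Poist} lies between two positive constants. With this, the compound kernel estimates of Theorem~\ref{t-main1} follow from
\[
\bar p_t(x)\leq C_1\rho_t^n e^{-b_2\rho_t\|x\|},\qquad
\bar p_t(x-x_t)\geq C_2\rho_t^n\I_{\{\|x\|\leq b_4/\rho_t\}},\qquad (t,x)\in(0,t_0]\times\rn.
\]

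For the upper bound I exploit that the L\'evy measure of $\bar Z_t$ is supported in $\{\|u\|\leq 1/\rho_t\}$, so $\psi_t$ from \eqref{psit} extends to an entire function on $\Cc^n$ with $|\psi_t(\xi+i\eta)-\psi_t(\xi)|$ bounded uniformly for $\|\eta\|\leq b_2\rho_t$. Fourier inversion together with a contour shift $\xi\mapsto\xi+i\eta$ gives
\[
\bar p_t(x)=(2\pi)^{-n}e^{-\eta\cdot x}\int_\rn e^{-i\xi\cdot x-\psi_t(\xi+i\eta)}\,d\xi.
\]
The heart of the argument is the uniform lower bound $\RR\psi_t(\xi+i\eta)\geq\tfrac12\RR\psi_t(\xi)-C$ for $\|\eta\|\leq b_2\rho_t$; on the truncation domain one has $|\eta\cdot u|\leq b_2$, so a Taylor expansion in $\eta\cdot u$ combined with the elementary inequality $1-\cos s\geq c\min(s^2,1)$ produces this estimate. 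Coupling it with $\RR\psi_t(\xi)\geq ct\psi^L(\xi)$, and using condition~\textbf{A} to guarantee that $t\psi^L(\xi)$ is bounded below by a positive constant once $\|\xi\|$ is a sufficiently large multiple of $\rho_t$, one finds that the shifted integrand is summable with integral of order $\rho_t^n$. Optimising in $\eta$ by taking $\eta=-b_2\rho_t x/\|x\|$ produces the claimed exponential factor $e^{-b_2\rho_t\|x\|}$.

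For the lower bound, the upper estimate together with $\int\bar p_t=1$ and the continuity of $\bar p_t$ (inherited from the rapid decay of $e^{-\psi_t}$) forces the maximum of $\bar p_t$ to be at least of order $\rho_t^n$ and to be attained inside $\{\|x\|\leq L/\rho_t\}$; the lexicographic convention then selects a well-defined $x_t$ with $\|x_t\|\leq L/\rho_t$. To propagate this to a lower bound on a ball of radius $\sim 1/\rho_t$ around $x_t$, I use the identity $\bar p_t=\bar p_{t/2}\ast\bar p_{t/2}$ (with $\rho_{t/2}\asymp\rho_t$ by \textbf{A}) together with the upper bound already proved for $\bar p_{t/2}$ to control the modulus of continuity of $\bar p_t$ at scale $1/\rho_t$; alternatively, one differentiates the Fourier representation under the integral sign to bound $\nabla\bar p_t$ directly.

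The principal obstacle is the contour-shift estimate, specifically the uniform (in $\xi$, $\eta$, $t$) lower bound on $\RR\psi_t(\xi+i\eta)$ under the only anisotropic hypothesis~\textbf{A}. One must split the integration region according to whether $\|\xi\|$ is small or large compared to $\rho_t$ and carefully monitor the cross-terms generated by the imaginary shift across directions on $\Sss^n$, so that they do not overwhelm the lower bound on $\RR\psi_t(\xi)$ furnished by $\psi^L$.
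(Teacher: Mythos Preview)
Your strategy matches the paper's closely: reduce to pointwise bounds on $\bar p_t$ via \eqref{conv1}--\eqref{Poist}, obtain the upper exponential bound by a complex contour shift in the Fourier inversion (this is the paper's Lemma~\ref{le2}), and derive the lower bound from the on-diagonal maximum combined with a gradient control around $x_t$. The integral bound $\int_{\rn} e^{-c\,t\psi^U(\xi)}\,d\xi\lesssim\rho_t^n$ under condition~\textbf{A} that you identify as the ``principal obstacle'' is exactly the estimate \eqref{itk2} in the paper, proved by the change of variables $\xi=e^v l$ and a two-sided splitting at $v_t=\ln\rho_t$.

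One genuine slip: the identity $\bar p_t=\bar p_{t/2}\ast\bar p_{t/2}$ is \emph{false}. The truncation radius $1/\rho_t$ in \eqref{psit} depends on $t$, so $\psi_t\neq 2\psi_{t/2}$ and $(\bar Z_t)_{t>0}$ is not a L\'evy process; hence there is no semigroup property to invoke. Your stated alternative---differentiating the Fourier representation to bound $\nabla\bar p_t$ directly---is precisely what the paper does (Lemma~\ref{le2} with $N=1$, followed by a first-order Taylor expansion around $x_t$), and that route is correct. A minor remark: the bound on $\Lambda_t(\rn)$ is not obtained by ``spherical averaging'' but by a coordinate-direction covering argument yielding $t\,\mu\{\|u\|>1/\rho_t\}\leq n+1$; see \eqref{qr}.
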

One can obtain in the same fashion as in the statement I of the preceding theorem that $p_t(\cdot)\in C^\infty_b(\rn)$, and construct the upper estimates for  derivatives.
\begin{proposition}\label{prop1}
Suppose that condition \textbf{A}   is satisfied.  Then  there exist
constants $b_1, b_2>0$ such that  for any $N\geq 1$,  $k_i\geq 0$,
$i=1\ldots n$, such that $k_1+\dots+k_n=N$, the function
$$
\left|\frac{\partial^N}{\partial x_1^{k_1} \dots \partial x_n^{k_n}}
p_t(x+a_t) \right|, \quad (t, x)\in (0, t_0]\times \rn,
$$
satisfies the upper compound kernel estimate with parameters $(\rho_t^{n+N}, f_{upper}, \rho_t, \Lambda_t)$.
\end{proposition}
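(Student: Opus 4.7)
The plan is to reduce the statement to a pointwise upper estimate on the partial derivatives of $\bar p_t$, and then obtain that estimate by the same Fourier inversion plus contour-shift argument that underlies Theorem~\ref{t-main1}.I.

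First, differentiation in $x$ commutes with the convolution \eqref{conv1} and lands on the smooth factor $\bar p_t$: for any multi-index $\alpha=(k_1,\ldots,k_n)$ with $|\alpha|=N$,
$$
\frac{\partial^N}{\partial x_1^{k_1}\cdots \partial x_n^{k_n}}\,p_t(x+a_t) = \int_\rn \partial^\alpha \bar p_t(x-y)\,P_t(dy).
$$
Expanding $P_t$ as in \eqref{Poist} and using $e^{-\Lambda_t(\rn)}\leq 1$, the desired compound kernel bound reduces to the pointwise estimate
$$
|\partial^\alpha \bar p_t(x)| \leq b_1\,\rho_t^{n+N}\, e^{-b_2\|x\|\rho_t}, \qquad x\in \rn,\ t\in(0,t_0],
$$
with constants uniform in $t$.

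To prove this pointwise bound I would start from the Fourier representation
$$
\partial^\alpha \bar p_t(x) = \frac{1}{(2\pi)^n}\int_\rn (i\xi)^\alpha\, e^{-i\xi\cdot x}\, e^{-\psi_t(\xi)}\,d\xi,
$$
rescale $\xi = \rho_t \eta$ (which cleanly isolates the factor $\rho_t^{n+N}$), and then shift the contour in $\eta$ to $\eta+i\zeta$ with $\zeta\in \rn$ of modulus of order one and pointing opposite to $x$. This is exactly the deformation used in the proof of Theorem~\ref{t-main1}.I, and it is legitimate because $\psi_t$ is an entire function of $\xi$ (its L\'evy measure is supported in $\{\rho_t\|u\|\leq 1\}$). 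The shift produces the exponential factor $e^{-b_2\|x\|\rho_t}$ and leaves an integral of the type
$$
\int_\rn \|\eta\|^N\, e^{-\RR\psi_t(\rho_t(\eta+i\zeta))}\,d\eta.
$$
By the lower bound on $\RR\psi_t$ established in the proof of Lemma~\ref{growth} (where condition~\textbf{A} enters), this real part grows at least like a positive power of $\|\eta\|$ as $\|\eta\|\to \infty$, uniformly in $t\in(0,t_0]$ and in $\zeta$ on the relevant compact set; hence $e^{-\RR\psi_t(\rho_t(\eta+i\zeta))}$ decays faster than any polynomial and the integral is finite with a bound independent of $t$.

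The main technical point, and the only genuine addition compared with Theorem~\ref{t-main1}.I, is verifying that the lower bound on $\RR\psi_t$ along the shifted contour is still strong enough to absorb the extra polynomial factor $\|\eta\|^N$ brought down by differentiation; in particular, one must check that shifting the contour does not weaken the growth of $\RR\psi_t$ in $\eta$ by more than a bounded additive term. Once this is in place, the rescaling in $\eta$ automatically supplies the correct power $\rho_t^{n+N}$, the same exponential envelope $f_{upper}$ of \eqref{fup} is recovered, and the compound kernel form follows from \eqref{conv1} and \eqref{Poist} by the same bookkeeping as in the proof of Theorem~\ref{t-main1}.I.
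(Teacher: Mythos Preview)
Your approach is correct and coincides with the paper's: the pointwise derivative bound $|\partial^\alpha \bar p_t(x)| \leq b_1 \rho_t^{n+N} e^{-b_2\rho_t\|x\|}$ to which you reduce is precisely Lemma~\ref{le2}, proved there by the same contour-shift argument (without the preliminary rescaling $\xi=\rho_t\eta$), and the compound kernel form then follows from \eqref{conv1} and \eqref{Poist} exactly as you indicate. One minor correction: the lower bound on $\RR\psi_t$ you invoke is established in \eqref{e1} inside the proof of Lemma~\ref{le2}, not in Lemma~\ref{growth}.
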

Clearly, in the  case of a symmetric L\'evy measure and a zero drift  the statement of Theorem~\ref{t-main1} holds true with  $a_t = x_t=0$. Moreover, one can get the sharper upper  estimate for $p_t(x)$ and its derivatives.

 \begin{proposition}\label{aux-theo-sym} Suppose that  the process $Z_t$ is symmetric,  and condition \textbf{A}   holds true.
Then the  first  statement of Theorem~\ref{t-main1}  and Proposition~\ref{prop1} hold true  with $a_t$ replaced by zero, and $f_{upper}$ replaced  by
\begin{equation}
f_{upper}(x)=b_1 e^{-b_2 \|x\|\ln (\|x\|+1)}. \label{fup-sym}
\end{equation}
\end{proposition}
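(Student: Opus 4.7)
The approach follows the non-symmetric argument underlying statement I of Theorem~\ref{t-main1}, with two refinements exploiting the symmetry of $\mu$. First, symmetry forces $a=0$ and $\int_{1/\rho_t<\|u\|<1}u\,\mu(du)=0$, so that $a_t=0$ and \eqref{conv1} reduces to $p_t=\bar p_t*P_t$; the sharpened compound kernel estimate for $p_t$ therefore follows from the pointwise bound
\begin{equation*}
\bar p_t(x)\leq b_1\rho_t^n\exp\!\bigl(-b_2\rho_t\|x\|\ln(\rho_t\|x\|+1)\bigr),\qquad (t,x)\in(0,t_0]\times\rn,
\end{equation*}
simply by substituting it into the expansion \eqref{Poist} of $P_t$. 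I would focus the proof on this pointwise bound. The key gain over the non-symmetric case is that the truncated measure $\mu\,\I_{\{\rho_t\|u\|\leq 1\}}$ has compact support, so that $\psi_t$ extends to an entire function on $\Cc^n$ and the contour shift $\xi\mapsto\xi-i\eta$ in the Fourier inversion for $\bar p_t$ may be taken with $\|\eta\|$ allowed to grow, rather than being fixed at size $O(\rho_t)$ as in the non-symmetric proof.

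Concretely, starting from
\begin{equation*}
\bar p_t(x)=(2\pi)^{-n}e^{-\eta\cdot x}\int_{\rn}e^{-i\xi\cdot x}e^{-t\psi_t(\xi-i\eta)}\,d\xi,
\end{equation*}
the symmetry of $\mu$ eliminates the imaginary drift correction and gives
\begin{equation*}
\RR\psi_t(\xi-i\eta)=t\int_{\rho_t\|u\|\leq 1}\bigl(1-\cos(\xi\cdot u)\cosh(\eta\cdot u)\bigr)\mu(du).
\end{equation*}
Writing $1-\cos(\xi\cdot u)\cosh(\eta\cdot u)=\cosh(\eta\cdot u)\bigl(1-\cos(\xi\cdot u)\bigr)-(\cosh(\eta\cdot u)-1)$, using $\cosh\geq 1$, and noting that $|\eta\cdot u|\leq\lambda$ whenever $\|\eta\|\leq\lambda\rho_t$ and $\rho_t\|u\|\leq 1$, one obtains
\begin{equation*}
\RR\psi_t(\xi-i\eta)\geq\RR\psi_t(\xi)-(\cosh\lambda-1)\,t\mu(\{\rho_t\|u\|\leq 1\}).
\end{equation*}
Condition \textbf{A}, via the same reasoning used for the integrability estimates in the proof of Theorem~\ref{t-main1}.I, implies that both $t\mu(\{\rho_t\|u\|\leq 1\})$ and $\rho_t^{-n}\int e^{-t\RR\psi_t(\xi)}\,d\xi$ are bounded uniformly in $t\in(0,t_0]$.

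Choosing $\eta=-\lambda\rho_t x/\|x\|$ then yields
\begin{equation*}
\bar p_t(x)\leq C_1\rho_t^n\exp\!\bigl(-\lambda\rho_t\|x\|+C_2(\cosh\lambda-1)\bigr),
\end{equation*}
valid for all $\lambda\geq 0$; optimizing by taking $\lambda$ of order $\ln(\rho_t\|x\|+1)$ produces the claimed decay $\exp(-b_2\rho_t\|x\|\ln(\rho_t\|x\|+1))$. The derivative bounds in Proposition~\ref{prop1} follow in parallel by inserting the monomial $\prod_j(-i\xi_j)^{k_j}$ into the Fourier integrand; the factor $\|\xi-i\eta\|^N$ then contributes an extra $\rho_t^N$ after the rescaling $\xi\mapsto\xi/\rho_t$ and is dominated by the exponential, so that the decay profile is preserved.

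The main technical obstacle is the rigorous justification of the contour shift for $\lambda$ growing like $\ln(\rho_t\|x\|+1)\to\infty$ as $\|x\|\to\infty$: one must verify that the lateral pieces of the rectangular Cauchy contour vanish uniformly, which requires a truly uniform bound on $t\mu(\{\rho_t\|u\|\leq 1\})$ together with control on $\int e^{-t\RR\psi_t(\xi-i\eta')}\,d\xi$ for $\eta'$ sweeping the shift region. Condition \textbf{A} is essential here because it couples the truncated mass and the real part of $\psi_t$ to $\rho_t$, ensuring that the logarithmic shift lies within the region where the integrand retains exponential decay in $\xi$.
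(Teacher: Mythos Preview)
Your overall strategy---shift the contour by $\eta$ with $\|\eta\|=\lambda\rho_t$ and then optimize in $\lambda$---matches the paper (which proves this via Lemma~\ref{le3}), but there is a genuine gap in the key inequality. You bound
\[
t\int_{\rho_t\|u\|\leq 1}(\cosh(\eta\cdot u)-1)\,\mu(du)\ \leq\ (\cosh\lambda-1)\,t\,\mu(\{\rho_t\|u\|\leq 1\}),
\]
and then assert that condition \textbf{A} makes $t\,\mu(\{\rho_t\|u\|\leq 1\})$ uniformly bounded. This is false: $\{\rho_t\|u\|\leq 1\}$ is a \emph{ball} around the origin, and since $\mu(\rn)=\infty$ (assumption~\eqref{mu}) while the complement of any ball has finite $\mu$-mass, every neighbourhood of $0$ has infinite $\mu$-mass. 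Thus your lower bound on $\RR\psi_t(\xi-i\eta)$ is vacuously $-\infty$. (The estimate from the non-symmetric proof that you are presumably recalling controls the \emph{tail} $t\,\mu(\{\rho_t\|u\|\geq 1\})$, not the head.)

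The fix is to retain a quadratic weight rather than a constant: since $\theta(s):=s^{-2}(\cosh s-1)$ is increasing and $|\eta\cdot u|\leq\|\eta\|/\rho_t$, one has $\cosh(\eta\cdot u)-1\leq\theta(\|\eta\|/\rho_t)(\eta\cdot u)^2$, whence
\[
t\int_{\rho_t\|u\|\leq 1}(\cosh(\eta\cdot u)-1)\,\mu(du)\ \leq\ \theta(\|\eta\|/\rho_t)\,(\|\eta\|/\rho_t)^2\,t\,\psi^*(\rho_t)\ =\ \cosh(\|\eta\|/\rho_t)-1.
\]
This is exactly the bound in the paper's Lemma~\ref{le3}, and with it your optimisation in $\lambda$ goes through as written. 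Once this correction is made, the remainder of your argument (including the treatment of derivatives and the observation that $a_t=0$ by symmetry) is correct and coincides with the paper's.
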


\section{Proofs}\label{proofs}

We start with the proof of the auxiliary lemma on the growth of $\psi^U$.

\begin{lemma}\label{growth}
Under condition \textbf{A} we have for $\|\xi\|$ large enough
\begin{equation}
\psi^U(\xi) \geq c \|\xi\|^{2/\beta}, \label{eq-gr}
\end{equation}
where $c>0$ is some constant.
\end{lemma}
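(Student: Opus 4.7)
The plan is to derive a differential inequality for $r\mapsto\psi^U(rl)$ with $l\in\Sss^n$ fixed, and then integrate it. Differentiating under the integral sign,
\begin{equation*}
\frac{d}{dr}\psi^U(rl)=\int_\rn\frac{\partial}{\partial r}\bigl((rl\cdot u)^2\wedge 1\bigr)\mu(du)=\int_{|rl\cdot u|<1}2r(l\cdot u)^2\mu(du)=\frac{2}{r}\psi^L(rl).
\end{equation*}
The exchange of derivative and integral is justified by dominated convergence: on any compact interval $[r_0,R]$ the $r$-derivative of the integrand is bounded in absolute value by $2R(l\cdot u)^2\I_{|l\cdot u|\leq 1/r_0}$, and this is $\mu$-integrable because $\int_{|u|\leq 1}|u|^2\mu(du)<\infty$ and $\mu(\{|u|>1\})<\infty$.

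Next, I would invoke condition \textbf{A}: for every $l\in\Sss^n$ and every $r$ sufficiently large,
\begin{equation*}
\psi^U(rl)\leq\psi^*(r)\leq\beta\psi_*(r)\leq\beta\psi^L(rl),
\end{equation*}
so $\psi^L(rl)/\psi^U(rl)\geq 1/\beta$. Combined with the derivative formula this produces, uniformly in $l\in\Sss^n$ and for all $r$ beyond a threshold $r_0$ (where \textbf{A} applies), the clean differential inequality
\begin{equation*}
\frac{d}{dr}\log\psi^U(rl)=\frac{2\psi^L(rl)}{r\,\psi^U(rl)}\geq\frac{2}{\beta r}.
\end{equation*}
Integrating from $r_0$ to $r$ yields $\psi^U(rl)\geq\psi^U(r_0l)\,(r/r_0)^{2/\beta}$.

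To upgrade this to the uniform-in-direction estimate \eqref{eq-gr}, I would use the elementary bound $\psi^U(r_0l)\geq\psi^L(r_0l)\geq\psi_*(r_0)$, together with the strict positivity of $\psi_*(r_0)$. Indeed, if $\psi_*(r_0)=0$, condition \textbf{A} would force $\psi^*(r_0)=0$, hence $\psi^U(r_0l')=0$ for every $l'\in\Sss^n$; this requires $l'\cdot u=0$ holds $\mu$-a.e.\ for every $l'$, which is only possible if $\mu$ is concentrated at the origin, contradicting $\mu(\rn)=\infty$. Setting $c:=\psi_*(r_0)/r_0^{2/\beta}$ then delivers $\psi^U(\xi)\geq c\|\xi\|^{2/\beta}$ for $\|\xi\|\geq r_0$. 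The conceptual core is the derivative identity $(\psi^U(rl))'=(2/r)\psi^L(rl)$, which directly couples $\psi^U$ and $\psi^L$; the only subtlety is the uniform-in-$l$ positivity of the initial datum, but this follows painlessly from \textbf{A} and $\mu\neq 0$.
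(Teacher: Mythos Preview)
Your proof is correct and follows essentially the same route as the paper: both arguments rest on the identity $\frac{d}{dr}\psi^U(rl)=\frac{2}{r}\psi^L(rl)$ (the paper writes it in integrated form via Fubini and then passes to the variable $v=\ln r$, whereas you differentiate directly), combine it with condition~\textbf{A} to obtain $(\log\psi^U(rl))'\geq 2/(\beta r)$, and integrate. Your treatment is in fact slightly more explicit than the paper's on the positivity of the initial datum $\inf_{l}\psi^U(r_0 l)$; one small caveat is that since $U$ is not differentiable at $\pm 1$, the cleanest justification of the derivative formula is exactly the Fubini argument the paper uses, which shows $r\mapsto\psi^U(rl)$ is absolutely continuous with the stated a.e.\ derivative.
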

\begin{proof}
For $l\in \mathbb{S}^n$ and $r>0$ let
\begin{equation}
\theta^U(rl):= \psi^U(e^r l),\quad  \theta^L(rl):=\psi^L(e^rl).
\label{thet}
\end{equation}
Note that  the functions $L$ and $U$  satisfy
$$
U(x_2)-U(x_1)=\int_{x_1}^{x_2} {2\over x}L(x)\, dx, \quad x_1<x_2.
$$
Then, taking two  parallel  vectors $\xi_1$ and $\xi_2$, and applying the above relation with $x_1=\xi_1\cdot u$, $x_2= \xi_2\cdot u$, where  $u\in \rn$ and $\|\xi_1\|\leq \|\xi_2\|$, we derive by the Fubini theorem
\begin{equation}
\begin{split}
\psi^U(\xi_2)-\psi^U(\xi_1) &=\int_\rn \big[U((\xi_2,u))-U((\xi_1,u))\big] \mu(du)
\\&
=\int_\rn \int_{\|\xi_1\|}^{\|\xi_2\|} \frac{2}{r} L(r(l\cdot u))dr \mu(du)
\\&
=\int_{\|\xi_1\|}^{\|\xi_2\|} \tfrac{2}{r} \psi^L(lr)\,dr,  \label{psipm2}
\end{split}
\end{equation}
where  $l:=\xi_1/\|\xi_1\|$. Thus, by \eqref{psipm2} and condition \textbf{A} we have
\begin{equation}
\theta^U(\xi_2) -\theta^U(\xi_1)\geq \frac{2}{\beta} \int_{\|\xi_1\|}^{\|\xi_2\|} \theta^U(vl)dv,\label{tu}
\end{equation}
implying   that
$e^{-\frac{2}{\beta} \|\xi_2\|} \theta^U(\xi_2)\geq
e^{-\frac{2}{\beta} \|\xi_1\|} \theta^U(\xi_1)$.
Thus,
$$
 \psi^U(e^{\|\xi_2\|} l ) = \theta^U(\xi_2)\geq c_1e^{\frac{2}{\beta} \|\xi_2\|},
$$
 where $c_1:=  e^{-\frac{2}{\beta} \|\xi_1\|}  \inf_{l\in \mathbb{S}^n} \theta^U(\xi_1)>0$. Taking $ \inf_{l\in \mathbb{S}^n}$  in the left-hand side of the preceding inequality, we arrive at \eqref{eq-gr}.
\end{proof}

The proof of Theorem~\ref{t-main1} and Proposition~\ref{prop1} rely on the following lemma.
\begin{lemma}\label{le2}
For each $t>0$ the variable $\overline{Z}_t$ possesses the density $\bar p_t(x)$, which satisfies
\begin{equation}
\left| \frac{\partial^N}{\partial x_1^{k_1}\ldots \partial
x_n^{k_N}} \overline{p}_t(x)\right| \leq b_1 \rho_t^{N+n} e^{-b_2
\rho_t\|x\|}, \quad x\in \rn, \quad t\in (0,t_0], \label{e20}
\end{equation}
for any $N\geq 0$,  $k_i\geq 0$,
$i=1\ldots n$, such that $k_1+\dots+k_n=N$.
\end{lemma}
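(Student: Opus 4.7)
The plan is a Fourier inversion combined with a complex contour shift, exploiting that the truncation $\rho_t\|u\|\leq 1$ in \eqref{psit} makes $\psi_t$ entire on $\Cc^n$. For fixed $x\neq 0$ and $t\in(0,t_0]$, I would set $\eta=-c_0\rho_t x/\|x\|$ with a small $c_0\in(0,1)$ to be chosen, and apply Cauchy's theorem to write
$$
\frac{\partial^N\bar p_t(x)}{\partial x_1^{k_1}\cdots \partial x_n^{k_n}}=\frac{e^{\eta\cdot x}}{(2\pi)^n}\int_\rn\Bigl(\prod_{j=1}^n(-i\xi_j+\eta_j)^{k_j}\Bigr)e^{-i\xi\cdot x-\psi_t(\xi+i\eta)}\,d\xi.
$$
The prefactor supplies exactly the decay $e^{-c_0\rho_t\|x\|}$, so the task reduces to bounding the $d\xi$-integral by $C\rho_t^{N+n}$ uniformly in $t\in(0,t_0]$ (the case $x=0$ uses the same formula with $\eta=0$, where the same bound on the integral gives the existence of $\bar p_t$ and the estimate at the origin).

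The key identity on the shifted contour is
$$
\RR\psi_t(\xi+i\eta)=\psi_t(i\eta)+t\int e^{-\eta\cdot u}(1-\cos(\xi\cdot u))\I_{\rho_t\|u\|\leq 1}\mu(du),
$$
where $\psi_t(i\eta)\in\real$. On the support one has $|\eta\cdot u|\leq\|\eta\|\,\|u\|\leq c_0\leq1$, so the Taylor estimate $|1-e^{-y}-y|\leq Cy^2$ gives $|\psi_t(i\eta)|\leq Ct\psi^L(\eta)\leq Ct\psi^*(c_0\rho_t)\leq C$, because $\psi^*$ is non-decreasing and $t\psi^*(\rho_t)=1$. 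Since $e^{-\eta\cdot u}\geq e^{-c_0}$ on the support, this yields
$$
\RR\psi_t(\xi+i\eta)\geq -C+e^{-c_0}\RR\psi_t(\xi).
$$
After the change of variables $\xi=\rho_t\zeta$, which produces the Jacobian $\rho_t^n$ and turns the polynomial prefactor into $\rho_t^N(\|\zeta\|+c_0)^N$, the remaining task is to show the uniform-in-$t$ lower bound $\RR\psi_t(\rho_t\zeta)\geq c\|\zeta\|^{2/\beta}-C$ for $\|\zeta\|$ large, so that the $d\zeta$-integral is finite and bounded by a constant independent of $t\in(0,t_0]$.

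This quantitative lower bound on the truncated exponent is where the main work lies. From \eqref{psit} and $1-\cos\leq 2$,
$$
\RR\psi_t(\xi)\geq t\RR\psi(\xi)-2t\mu(\{\rho_t\|u\|>1\}),
$$
and the defect is estimated using the elementary inequality $\I_{\rho_t\|u\|>1}\leq\sum_{i=1}^n((\rho_t e_i\cdot u)^2\wedge1)$, which gives $t\mu(\{\rho_t\|u\|>1\})\leq\sum_i t\psi^U(\rho_t e_i)\leq n t\psi^*(\rho_t)=n$. Combining with \eqref{eq1} and assumption \textbf{A} yields $t\RR\psi(\xi)\geq\tfrac{1-\cos1}{\beta}t\psi^*(\|\xi\|)$, and the monotonicity of $r\mapsto r^{-2/\beta}\psi^*(r)$ extracted from \eqref{tu} in the proof of Lemma~\ref{growth} gives the scaling $t\psi^*(\|\xi\|)\geq(\|\xi\|/\rho_t)^{2/\beta}$ for $\|\xi\|\geq\rho_t$. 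The obstacle, and the reason one cannot simply quote Lemma~\ref{growth}, is that $\bar Z_t$ carries the \emph{truncated} characteristic exponent $\psi_t$, not $\psi$; once the two displayed inequalities of this paragraph reduce the analysis of $\RR\psi_t$ to that of $\psi^*$, the remaining steps (contour shift, Taylor bound for $\psi_t(i\eta)$, dominated convergence for the derivative) are routine.
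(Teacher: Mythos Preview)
Your proof is correct and follows essentially the same contour-shift strategy as the paper: shift the Fourier integral to $\xi+i\eta$ with $\|\eta\|\sim\rho_t$, bound $|\psi_t(i\eta)|$ via Taylor's formula and $t\psi^*(\rho_t)=1$, and control the remaining $d\xi$-integral through a scaling lower bound on $\RR\psi_t$ extracted from condition~\textbf{A} and Lemma~\ref{growth}. Your tactical choices---the shift $\eta=-c_0\rho_t x/\|x\|$ in place of the paper's coordinatewise $\eta_i=-\rho_t\,\mathrm{sign}\,x_i$, the clean indicator bound $\I_{\rho_t\|u\|>1}\leq\sum_i\big((\rho_t e_i\cdot u)^2\wedge1\big)$ in place of the argument \eqref{qr}, and the direct substitution $\xi=\rho_t\zeta$ together with the monotonicity of $r^{-2/\beta}\psi^*(r)$ in place of the more elaborate $I_k$-integral analysis around \eqref{itk}--\eqref{e12}---are somewhat more streamlined, but the route is the same.
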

\begin{proof}
 For $n=1$ we have
$$
t\mu\{ u:\, \rho_t \|u\|\geq 1\} \leq t\psi^*(\rho_t)=1.
$$
 For $n\geq 2$ the situation is similar, but  a bit more complicated: since
\begin{equation}\label{qr}
\begin{split}
\mu\{ u:\, \|u\|\geq r\}&\leq \sum_{i=1}^n \mu\{ u:\, |u_i|\geq r\} +\mu\{ u:\, \|u\|\geq r, \, |u_i|<r, \, i=1,\ldots,n\}\\
&\leq \sum_{i=1}^n \mu\{ u:\, |u_i|\geq r\} + \mu\{ u:\, r/2\leq  |u_i|<r, \, i=1,\ldots,n\}\\
&= \sum_{i=1}^n \mu\{ u:\, |u_i|\geq r\} \\
&\quad + \mu\{ u:  \, |u_i|\geq  r, \, 1\leq  i\leq n\}- \mu\{u: \, |u_i|\geq  r/2,\,  1\leq  i\leq n\}\\
&\leq \sum_{i=1}^n \mu\{ u:\, |u_i|\geq r\} + \mu\{ u: \exists i: \, |u_i|\leq r\}\\
&\leq (n+1) \psi^{*} (1/r),
\end{split}
\end{equation}
we arrive at $t\mu\{ u:\, \rho_t \|u\|\geq 1\}\leq n+1$.  Therefore,
\begin{equation}
\begin{split}
 \RR \psi_t(\xi) &= t\RR\psi (\xi)- t \int_{\rho_t \| u\|\geq 1} (1-\cos (\xi \cdot u)) \mu(du)\geq t\RR\psi (\xi) -2t \mu\{ u:\, \rho_t \|u\|\geq 1\}
\\&
= t\RR\psi (\xi) -2(n+1) \geq
t\left(\frac{1-\cos 1}{\beta}\right) \psi^U(\xi)  - 2(n+1)
\geq c_1t \|\xi\|^{2/\beta} - 2(n+1). \label{e1}
\end{split}
\end{equation}
where in the last line we used \eqref{eq-gr}.
  Thus, by Lemma~\ref{growth}  the variable
$\overline{Z}_t$ possesses a distribution density
$\overline{p}_t\in C_b^\infty(\rn)$, and  for any $N\geq 0$,
$k_1+\ldots +k_n=N$, we have
\begin{equation}
 \frac{\partial^N}{\partial x_1^{k_1}\dots \partial x_n^{k_n}} \overline{p}_t(x)= (2\pi)^{-n}
 \int_\rn (-i x_1)^{k_1} \dots (-i x_n)^{k_n} e^{-ix\cdot \xi - \psi_t(\xi)}d\xi. \label{e21}
\end{equation}
Put $H(t,x,z):=-iz\cdot x -\psi_t(z)$. Note that by the structure of $\psi_t$ the function $H(t,x,z)$
 can be extended analytically (with respect to $z$)  to $\Cc^n$. Applying the Cauchy theorem, we derive
 \begin{align*}
\frac{\partial^N}{\partial x_1^{k_1}\ldots \partial x_n^{k_n}}
\overline{p}_t(x)&= (2\pi)^{-n}
 \int_\rn (-i z_1)^{k_1} \dots (-i z_n)^{k_n} e^{H(t,x,z)}dz
 \\&
 =
 (2\pi)^{-n}
 \int_\rn \prod_{j=1}^N(-i y_j+ \eta_j )^{k_j} e^{x\cdot\eta  - i x\cdot y- \psi_t(y+i\eta)}dy.
\end{align*}
 for any $\eta\in \rn$ satisfying  $\|\eta\|\leq \rho_t$. Since the  proof of the above equality repeats line by line  the proof   of \cite[Lemma~3.4]{KK12a},
see also \cite{KS10a} and \cite{KK11} for the $n$-dimensional case, we omit the details.

For $\|\eta\|\leq \rho_t$ we have
\begin{align*}
\RR H(t,x,y+i\eta)&= x\cdot \eta -t \int_{\rho_t \| u\|\leq 1} \left( 1-\eta \cdot u- e^{-u \cdot \eta}\right) \mu(du)
\\&
\quad - t \int_{\rho_t \| u\|\leq 1}  e^{-\eta \cdot u} (1-\cos (y\cdot u)) \mu(du)
\\&
\leq x\cdot \eta - \psi_t(i\eta)-e^{-1}\RR \psi_t(y),
\end{align*}
which implies the upper bound
\begin{equation}
\left|\frac{\partial^N}{\partial x_1^{k_1}\ldots \partial x_n^{k_n}}
\overline{p}_t(x) \right| \leq c_2 e^{\eta \cdot x -\psi_t(i\eta)}
\int_\rn (\|\eta\|+\|y\|)^N e^{-e^{-1} \RR\psi_t(y)}dy. \label{e22}
\end{equation}
Put
$$
c:= \sup_{|s|\leq 1} \Big|\frac{1-s-e^{-s}}{s^2}\Big|, \quad s\in \real.
$$
Using again the inequality $\|\eta\|\leq \rho_t$ and that $\{ u:\, \rho_t \|u\|\leq 1\}\subset \{u: \, |\eta\cdot u|\leq 1\}$,  we derive
$$
-\psi_t (i\eta) \leq ct \int_{\rho_t \|u\|\leq 1} |\eta\cdot u|^2 \mu(du)\leq ct \psi^*(\rho_t)
= c.
$$
Thus,  taking in \eqref{e22} the vector $\eta$ with coordinates
$\eta_i =-\rho_t sign\,  x_i$, $i=1\ldots n$, we get
\begin{equation}
\left|\frac{\partial^N}{\partial x_1^{k_1}\ldots \partial x_n^{k_n}}
\overline{p}_t(x) \right|\leq c_3 e^{-\rho_t \|x\|} \int_\rn
(\rho_t^N + \|y\|^N) e^{-e^{-1}\RR \psi_t(y)}dy,
\end{equation}
where $c_3 \equiv c_3(n,N)>0$  is some constant.  Recall that in  \eqref{e1} we proved that
$\RR \psi_t(y)\geq  t c_4  \psi^U(y)  - 2$, where $c_4:= \frac{1-\cos 1}{\beta}$.
Therefore,
$$
\left|\frac{\partial^N}{\partial x_1^{k_1}\ldots  \partial
x_n^{k_n}} \overline{p}_t(x) \right| \leq c_5 e^{-\rho_t \|x\|}
\sup_{l\in \Sss^n} \left( \rho_t^N I_{n-1} (t,c_6,l)+
I_{N+n-1}(t,c_6, l)\right),
$$
where $c_6 := e^{-1} c_4$, and
\begin{equation}
I_k (t,\lambda,l):= \int_0^\infty e^{- \lambda t \theta^U(vl) +(k+1)v}dv, \quad k\geq 0.  \label{itk}
\end{equation}
To finish the proof we need to show that
\begin{equation}
\sup_{l\in \Sss^n} I_k(t,\lambda,l)\leq c_7\rho_t^{k+1}. \label{itk2}
\end{equation}
We get
\begin{align*}
\sup_{l\in \Sss^n} I_k(t,\lambda,l)&= \rho_t^{k+1}  \underset{l\in \Sss^n}{\sup}\int_0^\infty e^{-\lambda t [\theta^U (vl)-\theta^U(v_t l)]+(k+1)(v-v_t)- \lambda t \theta^U(v_t l)}dv
\\&
\leq \rho_t^{k+1} \int_0^\infty
e^{-\lambda t \underset{l\in \Sss^n}{\inf}[\theta^U (vl)-\theta^U(v_t l)]+(k+1)(v-v_t)-\lambda t\underset{l\in \Sss^n}{\inf} \theta^U(v_t l) }dv
\\&
\leq \rho_t^{k+1} \left[\int_0^{v_t}+ \int_{v_t}^\infty\right]
e^{-\lambda t \underset{l\in \Sss^n}{\inf}[\theta^U (vl)-\theta^U(v_t l)]+(k+1)(v-v_t)}dv,
\end{align*}
where $v_t:= \ln \rho_t$, and in the last line we used that $\theta^U$ is non-negative.
To estimate the first integral
observe that
\begin{equation}
\int_0^{v_t}
e^{-\lambda t [\theta^U (vl)-\theta^U(v_t l)]+(k+1)(v-v_t)}dv
\leq e^{\lambda t \psi^U(l\rho_t)}  \int_0^{v_t} e^{(k+1)(v-v_t)}dv\leq
\frac{e^\lambda}{k+1}.\label{e10}
\end{equation}
Using condition \textbf{A} and \eqref{tu}  we derive
\begin{align*}
[\theta^U(vl)-\theta^U(v_t l)]&= 2 \int_{v_t}^v \theta^L(rl)dr\geq \frac{2}{\beta} \int_{v_t}^v \theta^U(rl)dr
\\&
=\frac{2}{\beta} \theta^U(v_t l) (v-v_t)+\frac{4}{\beta}\int_{v_t}^v \int_{v_t}^r \theta^L(sl)dsdr
\\&
\geq
\frac{2}{\beta} \theta^U(v_t l) (v-v_t)+\frac{4}{\beta^2}\int_{v_t}^v \int_{v_t}^r \theta^U(sl)dsdr
\\&
\geq \frac{2}{\beta} \theta^U(v_t l) (v-v_t)+\frac{4}{\beta^2}\theta^U(v_t l) (v-v_t)^2.
\end{align*}
Further, by  \eqref{eq1} and condition \textbf{A}  we have
\begin{equation}
t \inf_{l\in \Sss^n} \theta^U(v_t l)\geq \tfrac{t(1-\cos 1)}{2\beta} \sup_{l\in \Sss^n} \psi^U(\rho_t l) =  \tfrac{t(1-\cos 1)}{2\beta} \sup_{l\in \Sss^n} \psi^* (\rho_t) = \tfrac{1-\cos 1}{2\beta} , \label{e11}
\end{equation}
 implying
$$
t\inf_{l\in \Sss^n} [\theta^U(vl)-\theta^U(v_t l)] \geq
b(v-v_t)+2b\beta^{-1} (v-v_t)^2,
$$
where $b=\tfrac{1-\cos 1}{\beta^2} $. Thus,
\begin{equation}
\int_{v_t}^\infty e^{-t \lambda \inf_{l\in \Sss^n}[\theta^U (vl)-\theta^U(v_t l)]+(k+1)(v-v_t)}dv\leq \int_0^\infty e^{(k+1)w-b\lambda  w - \frac{2b\lambda}{\beta} w^2}dw<\infty.
\label{e12}
\end{equation}
Combining \eqref{e10} and \eqref{e12} we get \eqref{itk2}, which finishes the proof.
\end{proof}

If the L\'evy measure $\mu$  is symmetric, one can refine the upper estimate in \eqref{e20}.
\begin{lemma}\label{le3}
Let condition \textbf{A} hold true, and suppose in addition that the
L\'evy measure $\mu$ is symmetric. Then for any $N\geq 0$, and any
$k_i\geq 0$, $i=1\ldots n$, $k_1+\ldots +k_n=N$, we have
\begin{equation}
\left|\frac{\partial^N}{\partial x_1^{k_1}\ldots \partial x_n^{k_N}}
\overline{p}_t(x)\right|\leq b_1 \rho_t^{N+n} e^{-b_2 \rho_t \|x\|
\ln (\rho_t\|x\|+1)}, \quad x\in \rn,\quad t\in (0,t_0]. \label{e30}
\end{equation}
\end{lemma}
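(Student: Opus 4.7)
The plan is to run exactly the same contour-shift scheme as in the proof of Lemma~\ref{le2}, but to exploit the symmetry of $\mu$ in order to allow the imaginary shift $\eta$ of the Fourier contour to grow with $\|x\|$ at a logarithmic rate, rather than to be held at $\|\eta\|\leq\rho_t$. Because $\mu$ and the truncation set $\{\rho_t\|u\|\leq 1\}$ are symmetric, the compensating term in \eqref{psit} vanishes, and
\[
\psi_t(\xi)=t\!\int_{\rho_t\|u\|\leq 1}\!(1-\cos(\xi\cdot u))\mu(du),\qquad -\psi_t(i\eta)=t\!\int_{\rho_t\|u\|\leq 1}\!(\cosh(\eta\cdot u)-1)\mu(du).
\]
As in Lemma~\ref{le2}, $H(t,x,z)=-iz\cdot x-\psi_t(z)$ extends to an entire function of $z\in\Cc^n$, and the Cauchy theorem yields the shifted representation
\[
\frac{\partial^N\overline{p}_t(x)}{\partial x_1^{k_1}\cdots\partial x_n^{k_n}} =(2\pi)^{-n}\!\int_\rn\prod_{j=1}^n(-iy_j+\eta_j)^{k_j}\,e^{x\cdot\eta-ix\cdot y-\psi_t(y+i\eta)}dy
\]
for an \emph{arbitrary} $\eta\in\rn$; the decay in $y$ is still provided by $\RR\psi_t(y+i\eta)\geq e^{-1}\RR\psi_t(y)$, exactly as before.

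The new ingredient is the control of $-\psi_t(i\eta)$ for $\|\eta\|=K\rho_t$ with $K\geq 1$. Writing $\eta=K\rho_t l$, $l\in\Sss^n$, I would combine the elementary inequality $\cosh s-1\leq\tfrac{1}{2}s^2 e^{|s|}$ with the fact that on $\{\rho_t\|u\|\leq 1\}$ one has $|\eta\cdot u|\leq K$ and $(\eta\cdot u)^2=K^2\,U(\rho_t l\cdot u)$, to obtain
\[
-\psi_t(i\eta)\leq \tfrac{tK^2}{2}e^K\!\!\int_{\rho_t\|u\|\leq 1}\!\!U(\rho_t l\cdot u)\,\mu(du)\leq \tfrac{K^2 e^K}{2}\,t\psi^*(\rho_t)=\tfrac{K^2 e^K}{2},
\]
where the last identity is the defining relation $t\psi^*(\rho_t)=1$.

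I would then take $\eta$ antiparallel to $x$, namely $\eta=-K\rho_t x/\|x\|$, so that $\eta\cdot x=-KR$ with $R:=\rho_t\|x\|$, and choose $K=c_0\ln(R+1)$ for a sufficiently small $c_0\in(0,1)$. This produces
\[
e^{x\cdot\eta-\psi_t(i\eta)}\leq \exp\!\Bigl(-c_0 R\ln(R+1)+\tfrac{c_0^2}{2}(\ln(R+1))^2(R+1)^{c_0}\Bigr),
\]
and because $c_0<1$ the second summand is $o(R\ln(R+1))$ as $R\to\infty$, so beyond some threshold $R_0$ the exponent is dominated by $-\tfrac{c_0}{2}R\ln(R+1)$. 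The residual $y$-integral with the polynomial prefactor $(\|\eta\|+\|y\|)^N$ is handled exactly as in Lemma~\ref{le2}: the $\|y\|^N$ piece contributes the $\rho_t^{N+n}$ factor, while $\|\eta\|^N=(c_0\ln(R+1))^N\rho_t^N$ is absorbed into the exponential by a cosmetic decrease of $b_2$. For $R\leq R_0$ the bound of Lemma~\ref{le2} already yields \eqref{e30} up to constants, since $R\ln(R+1)$ is then bounded. The main obstacle is the middle step: on $\{\rho_t\|u\|\leq 1\}$ the integrand $\cosh(\eta\cdot u)-1$ blows up exponentially in $\|\eta\|$, so $\|\eta\|$ can only be allowed to grow \emph{logarithmically} in $\|x\|$; balancing this exponential loss against the linear gain $e^{-KR}$ from the contour shift is precisely what produces the extra $\ln(\rho_t\|x\|+1)$ factor over the bound of Lemma~\ref{le2}, and is also what forces the parameter $c_0$ to be strictly less than~$1$.
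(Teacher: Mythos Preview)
Your overall strategy matches the paper's: shift the Fourier contour by $\eta$ with $\|\eta\|=K\rho_t$ and let $K$ grow logarithmically in $R:=\rho_t\|x\|$. Your bound $-\psi_t(i\eta)\leq\tfrac12K^2e^K$ is correct (the paper obtains the marginally sharper $\cosh K-1$ via the monotonicity of $s\mapsto s^{-2}(\cosh s-1)$, and then minimises $-\|x\|\,\|\eta\|+\cosh(\|\eta\|/\rho_t)$ over $\|\eta\|$ rather than fixing $K=c_0\ln(R+1)$; both variants give the same conclusion).

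The gap is your assertion that ``the decay in $y$ is still provided by $\RR\psi_t(y+i\eta)\geq e^{-1}\RR\psi_t(y)$, exactly as before.'' In Lemma~\ref{le2} that inequality came from $e^{-\eta\cdot u}\geq e^{-\|\eta\|\,\|u\|}\geq e^{-1}$ on $\{\rho_t\|u\|\leq 1\}$, which used the restriction $\|\eta\|\leq\rho_t$. Once $\|\eta\|=K\rho_t$ with $K>1$ the same computation delivers only $e^{-K}\RR\psi_t(y)$; feeding $\lambda=c_6e^{-K}$ into the estimate \eqref{e12} for $I_k(t,\lambda,l)$ then produces an extra factor of order $\exp\bigl(Ce^{K}\bigr)=\exp\bigl(C(R+1)^{c_0}\bigr)$ in the $y$-integral, which you have not accounted for. (It happens that for $c_0<1$ this factor can still be absorbed into the main exponential, but that is an additional argument you have not made.)

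The clean fix---and this is precisely what the paper does (cf.\ \eqref{sym1})---is to exploit symmetry a second time, now in the $y$-variable. Since both $\mu$ and the truncation set are symmetric,
\[
\RR\psi_t(y+i\eta)=t\!\int_{\rho_t\|u\|\le1}\!\bigl(1-\cosh(\eta\cdot u)\cos(y\cdot u)\bigr)\,\mu(du),
\]
and the elementary identity $1-\cosh a\cos b=(1-\cos b)-(\cosh a-1)+(\cosh a-1)(1-\cos b)$ gives
\[
\RR\psi_t(y+i\eta)\ \geq\ \RR\psi_t(y)+\psi_t(i\eta)\qquad\text{for \emph{every} }\eta\in\rn.
\]
Hence $|e^{-\psi_t(y+i\eta)}|\leq e^{-\psi_t(i\eta)}e^{-\RR\psi_t(y)}$: the factor $e^{-\psi_t(i\eta)}$ comes outside and joins $e^{\eta\cdot x}$, while the $y$-integral now carries the full coefficient $1$ before $\RR\psi_t(y)$ and is therefore bounded, uniformly in $\eta$, by $c_1(\|\eta\|^N\rho_t^n+\rho_t^{N+n})$ via Lemma~\ref{le2}. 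After this correction the rest of your argument goes through.
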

\begin{proof}
By the same argument as in \cite[Lemma~3.6]{KK12a} we have for \emph{any}  $\eta\in \rn$
\begin{equation}
 \left|\frac{\partial^N}{\partial x_1^{k_1}\ldots \partial x_n^{k_N}} \overline{p}_t(x)\right|\leq (2\pi)^{-n}  e^{\eta\cdot x- \psi_t (i\eta)} \int_\rn (\|y\|+\|\eta\|)^N e^{-\RR \psi_t(y)}dy. \label{sym1}
 \end{equation}
 By Lemma~\ref{le2}, the integral in \eqref{sym1} is estimated from above by $c_1 (\|\eta\|^N \rho_t^n + \rho_t^{N+n})$, where $c_1>0$ is some constant. For $\psi_t(i\eta)$ we have
 \begin{align*}
 -\psi_t (i\eta)&= t \int_{\rho_t \|u\|\leq1} [\cosh (\eta \cdot u)-1]\mu(du)= t\theta\big(\|\eta\|/\rho_t\big) \int_{\rho_t \|u\|\leq1}
 (\eta \cdot u)^2 \mu(du)
 \\&
 \leq t\theta\big(\|\eta\|/\rho_t\big)\big(\|\eta\|/\rho_t\big)^2 \sup_{l\in \Sss^n} \int_{\rho_t \|u\|\leq 1} \rho_t^2 (l \cdot u)^2 \mu(du)
 \\&\leq  \Big(\cosh(\|\eta\|/\rho_t)-1 \Big) t \psi^*(\rho_t)
 \\&
 = \cosh(\|\eta\|/\rho_t)-1,
 \end{align*}
 where $\theta(s):= s^{-2} \big(\cosh s-1\big)$, $s\geq 0$, is increasing. Since sofar $\eta$ was arbitrary, take $\eta$ with coordinates satisfying  $sign \,\eta_i = - sign\, x_i$,
 $i=1\ldots n$. Then
\begin{equation}
\left|\frac{\partial^N}{\partial x_1^{k_1}\ldots \partial x_n^{k_N}}
\overline{p}_t(x)\right|\leq c_2 \rho_t^{N+n}e^{
-\|x\|\|\eta\|+\cosh(\|\eta\|/\rho_t)}. \label{e32}
\end{equation}
Minimizing the expression under the exponent in \eqref{e32} in $\|\eta\|$, we arrive at \eqref{e30}.
\end{proof}

\begin{proof}[Proof of Theorem~\ref{t-main1}]
\emph{Upper bound.} The proof of the upper bound follows from Lemmas~\ref{growth},  \ref{le2}, and representation \eqref{conv1}.

\emph{Lower bound.}  From Lemma~\eqref{le2} we know that the function $\overline{p}_t(x)$ is  continuous in $x$,  and bounded from above by $b_1\rho_t^n$.  Without loss of generality we may assume that
$\int_{\rho_t \|x\|\leq 1} \overline{p}_t(x)dx\geq 1/2$.
Then
$$
1/2 \leq  \int_{\rho_t \|x\|\leq L} \overline{p}_t(x)dx\leq \frac{w_n L^n}{\rho_t^n} \max_{x\in \rn} \overline{p}_t(x),
$$
where $w_n$ is the volume of a unit ball in $\rn$.  Let $x_t$ be the "smallest" in the lexicographical sense point in which the maximum of  $\overline{p}_t(x)$ is achieved.   For the off-diagonal lower bound we get using the Taylor formula:
\begin{equation}
\begin{split}
\overline{p}_t(x) &\geq \overline{p}_t(x_t)-\left| \sum_{i=1}^n (x-x_t)_i \int_0^1 \frac{\partial}{\partial x_i}\overline{p}_t(x_t+r(x-x_t)) dr  \right|
\\&
\geq \overline{p}_t(x_t) - \left( \sum_{i=1}^n \int_0^1 \big| \frac{\partial}{\partial x_i} \overline{p}_t(x_t+r(x-x_t)) \big|^2 dr\right)^{1/2} \|x-x_t\|
\\&
\geq  \frac{1}{2 w_n L^n}\rho_t^n - c_1(n)   \rho_t^{n+1} \|x-x_t\|
\\&
= c_2(n) \rho_t^n \big( 1-c_3(n)\rho_t \|x-x_t\|\big),
\end{split}
\end{equation}
where in the second line form below  we used the on-diagonal estimate
$$
\left|\frac{\partial}{\partial y_i} \overline{p}_t(y)\right| \leq  c(n)\rho_t^{n+1}.
$$
\end{proof}

\section{Bell-like estimates}\label{subb}

In this section we discuss some particular cases  in which  we pose  more restrictive assumptions on the regularity of the tail of the L\'evy measure.  We show that under certain assumptions  it is possible
to write  more explicit upper and lower estimates for $p_t(x)$.  At the same time, we emphasize that although such  estimates  can be more explicit, they suppress the vital information about the transition probability  density, given by the compound kernel estimates. Moreover, as we will see below,  a bell-like estimate may heavily depend on the space dimension.

We begin with some notions on \emph{sub-exponential distributions} in the multi-dimensional setting, see \cite{Om06} and  \cite{OMS06} for more details.  We keep the notation of Theorem~\ref{t-main1}.
\begin{definition}\cite{Om06}
We say that $G$ is a \emph{sub-exponential} distribution on $\rn$ (and write $G \in \LL(\rn)$) if for all $x\in \rn$ such that  $\min_i x_i<\infty$, we have
\begin{equation}
\lim_{t\to\infty} \frac{1-G^{*2}(tx)}{1-G(tx)}=1. \label{sub1}
\end{equation}
\end{definition}
Theorem below generalizes the one-dimensional result, proved in \cite{KK12a}.
\begin{theorem}\label{Omey}
Let  condition \textbf{A} hold true,
and suppose that there exist a   distribution function  $G\in \LL(\rn)$, such that
\begin{equation}
t\mu\Big(\{u: \|\rho_t u\|>\|v\|\}\Big)\leq C(1-G(v)), \quad \|v\|\geq 1, \quad t\in (0,t_0], \label{dens02}
\end{equation}
where $C>0$ is some constant, independent of $t$. Then  for every $t_0>0$ there exist some constant $C_1>0$, such that
\begin{equation}\label{bell1}
p_t(x+a_t)\leq C_1  \rho_t^n \left( f_{upper}(\rho_t x)+ 1-G(x\rho_t)\right), \quad x\in \rn, \quad t\in (0, t_0],
\end{equation}
where $f_{upper}$ is defined by \eqref{fup}. 
 If the inequality \eqref{dens02}  holds true with the sign $\geq$, then
\begin{equation}
p_t(x+a_t -x_t) \geq C_2 \rho_t^n \big( f_{lower}(\rho_t x)+1-G(\rho_t x)\big), \quad x\in \rn, \quad t\in(0,t_0],
\end{equation}
where $C_2>0$ is some constant, and $f_{lower}$ is defined in \eqref{flower}.
\end{theorem}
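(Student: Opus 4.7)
The plan is to derive both bounds from the compound kernel estimates of Theorem~\ref{t-main1} by rewriting the convolution series in bell-like form under the sub-exponential assumption on $G$. After rescaling by $y\mapsto\rho_t y$, the pushforward $\tilde\Lambda_t$ of $\Lambda_t$ is a finite measure of total mass at most $n+1$ (by \eqref{qr}) whose tail satisfies $\tilde\Lambda_t(\{w:\|w\|>\|v\|\})\leq C(1-G(v))$ for $\|v\|\geq 1$ by \eqref{dens02}. The upper compound kernel estimate of Theorem~\ref{t-main1} then becomes
$$
p_t(x+a_t)\leq \rho_t^n\sum_{m=0}^\infty\frac{1}{m!}\int_\rn f_{upper}(\rho_t x-w)\,\tilde\Lambda_t^{*m}(dw),
$$
localising the whole problem at the scale $\rho_t^{-1}$.

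For the upper bound I would treat $\|\rho_t x\|\leq R$ separately: on this region the on-diagonal bound $p_t(x+a_t)\leq c\rho_t^n$, immediate from $\|f_{upper}\|_\infty\leq b_1$ together with $\sum_m\tilde\Lambda_t^{*m}(\rn)/m!\leq e^{n+1}$, is absorbed into $C_1\rho_t^n f_{upper}(\rho_t x)$ because $f_{upper}$ is bounded from below there. For $\|\rho_t x\|>R$ I would split each integral in $m$ according to whether $\|w\|\leq\|\rho_t x\|/2$ or $\|w\|>\|\rho_t x\|/2$. The near part satisfies $f_{upper}(\rho_t x-w)\leq b_1 e^{-b_2\|\rho_t x\|/2}$ and, summed against $\sum_m\tilde\Lambda_t^{*m}(\rn)/m!$, produces a contribution of order $e^{-(b_2/2)\|\rho_t x\|}$ that is absorbed into $f_{upper}(\rho_t x)$ after halving $b_2$. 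On the far part, $f_{upper}$ is replaced by $b_1$ and one invokes a multivariate Kesten-type bound for sub-exponential distributions from \cite{Om06, OMS06}: after normalising $\tilde\Lambda_t$ to a probability measure with tail uniformly controlled by $C(1-G)$, one obtains $\tilde\Lambda_t^{*m}(\{\|w\|>\|v\|\})\leq K c^m(1-G(v))$ with $c\geq 1$ independent of $t$, so the series in $m$ converges and is dominated by $K'(1-G(\rho_t x/2))$, which is comparable to $1-G(\rho_t x)$ by the slow variation of $1-G$ intrinsic to $G\in\LL(\rn)$.

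For the lower bound I would retain only the $m=0$ and $m=1$ terms of the lower compound kernel series from Theorem~\ref{t-main1}. The $m=0$ contribution is precisely $\rho_t^n f_{lower}(\rho_t x)$. The $m=1$ contribution equals $\rho_t^n b_3\tilde\Lambda_t(\{w:\|w-\rho_t x\|\leq b_4\})$, which for $\|\rho_t x\|>b_4$ must be compared with $\rho_t^n(1-G(\rho_t x))$: one uses the reverse inequality in \eqref{dens02} together with the slow variation of $1-G$ inherent to $\LL(\rn)$ to convert the tail lower bound into a local mass lower bound on $\tilde\Lambda_t$ at the point $\rho_t x$. The principal technical obstacle in both directions is exactly this interplay between tail-side hypotheses and convolution- or ball-mass manipulations of $\tilde\Lambda_t$: securing the multivariate Kesten bound with constants uniform in $t$ on the upper side, and transferring a tail-only lower bound into a local mass estimate on the lower side.
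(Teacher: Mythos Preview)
Your upper-bound argument is a genuinely different route from the paper's. The paper does not split into near/far regions or invoke a Kesten-type inequality; instead it collapses the whole compound series into a single convolution $f_{upper}*G$ (equation \eqref{dens20}) and then applies Omey's Theorem~10, quoted as \eqref{H3}, with $R=1-f_{upper}$: since $f_{upper}$ decays exponentially while any sub-exponential tail decays slower than $e^{-c\|x\|}$, one has $\beta=0$ in \eqref{R1}, whence \eqref{H3} gives directly $\int f_{upper}(\rho_t x-v)\,dG(v)\sim 1-G(\rho_t x)$ as $\rho_t\to\infty$. Your approach is more hands-on and exposes the mechanism, but you then need the multivariate Kesten bound with constants uniform in $t$, which is not explicitly stated in \cite{Om06,OMS06}; the paper sidesteps this by using the ready-made limit relations \eqref{H1} and \eqref{H3}.

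For the lower bound there is a real gap, and it is precisely the one you flag. Keeping only the $m=1$ term forces you to bound $\tilde\Lambda_t(B(\rho_t x,b_4))$ from below using only the radial tail hypothesis $\tilde\Lambda_t(\{\|w\|>\|v\|\})\geq C^{-1}(1-G(v))$. This conversion is impossible in general: a measure can satisfy the tail lower bound and still assign zero mass to the ball $B(\rho_t x,b_4)$ --- think of $\tilde\Lambda_t$ supported on a sequence of thin spheres (as in Example~\ref{exa2}), which miss a generic ball entirely. The long-tailed property $1-G(v-a)\sim 1-G(v)$ constrains $G$, not $\tilde\Lambda_t$, and gives you no leverage on local masses. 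The paper does \emph{not} proceed via the $m=1$ term; it reruns the same Omey-based argument as for the upper bound, now with $R=1-f_{lower}$ (so again $\beta=0$), and works entirely at the level of tails through \eqref{H3}. Whatever one thinks of the paper's one-line justification, its mechanism is different from yours: it never attempts a tail-to-local-mass conversion, and your proposed route cannot be completed as stated.
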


In \cite{KK12a} we  proved a version of Theorem~\ref{Omey} in the case when the measure $\mu$ is absolutely continuous, and the density is sub-exponential in the sense of \cite{Kl89}. Up to our knowledge sub-exponential \emph{densities} are not studied in the multi-dimensional case, see, however, \cite{Om06} for a brief comment.  We strongly believe that the result analogous to those proved in \cite{KK12a} also can be proved in the  multi-dimensional setting, after establishing the necessary properties of sub-exponential densities analogous to those presented in \cite{Kl89}. However, it is possible to prove a version of Theorem~\ref{Omey} under the assumption of a power decay of the L\'evy density.
\begin{theorem}\label{den}
Let  condition \textbf{A} hold true.
Suppose that $\mu(du)=m(u)du$, and  for $\|u\|\geq 1$ we have  the estimate
\begin{equation}
t\rho_t^{-n} m(u\rho_t^{-1}) \leq \|u\|^{-n-b},
 \quad t\in (0,t_0],\label{dens01}
\end{equation}
where  $b>0$.  Then
\begin{equation}\label{bell11}
p_t(x+a_t)\leq c_1\frac{\rho_t^n}{(1+\rho_t \|x\|)^{n+b}},  \quad x\in \rn, \quad t\in (0, t_0].
\end{equation}
If the inequality \eqref{dens01}  holds true with the sign $\geq$, then
\begin{equation}
p_t(x+a_t -x_t) \geq c_2\frac{\rho_t^n}{(1+\rho_t\|x\|)^{n+b}},  \quad x\in \rn, \quad t\in (0, t_0].
\end{equation}
\end{theorem}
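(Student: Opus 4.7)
The plan is to derive both estimates directly from the compound kernel bounds of Theorem~\ref{t-main1}, reducing the problem to a single self-convolution estimate for the power kernel
\[
K(x):=\frac{\rho_t^n}{(1+\rho_t\|x\|)^{n+b}}.
\]
The starting observation is that after the rescaling $v=\rho_t u$, the hypothesis \eqref{dens01} becomes $t\rho_t^{-n}m(v/\rho_t)\leq\|v\|^{-n-b}$ on $\{\|v\|\geq 1\}$, which combined with the cutoff $\I_{\{\rho_t\|u\|>1\}}$ in \eqref{lam} shows that $\Lambda_t(du)\leq c\,K(u)\,du$ for some $c$ independent of $t\in(0,t_0]$. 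The exponential weight $\rho_t^n f_{upper}(\rho_t x)$ is itself pointwise dominated by a constant multiple of $K(x)$.

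For the upper bound I would insert both of these dominations into the upper statement of Theorem~\ref{t-main1},
\[
p_t(x+a_t)\leq \sum_{m=0}^{\infty}\frac{1}{m!}\int_{\rn}\rho_t^n f_{upper}((x-y)\rho_t)\,\Lambda_t^{*m}(dy),
\]
and reduce everything to showing $K*K\leq c'\,K$. After the scaling $x\mapsto x/\rho_t$ this becomes a $t$-independent convolution estimate for $(1+\|\cdot\|)^{-n-b}$: splitting the integral at $\{\|y\|\leq\|x\|/2\}$ versus its complement and using $\|x-y\|\geq\|x\|/2$ on the first region, $\|y\|\geq\|x\|/2$ on the second, the remaining factor is integrable precisely because $b>0$. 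Iterating gives $K^{*m}\leq (c')^{m-1}K$, and the compound kernel series telescopes to a constant multiple of $K(x)$, yielding \eqref{bell11}.

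For the lower bound I would start from the lower compound kernel estimate of Theorem~\ref{t-main1} and retain only the $m=0$ and $m=1$ terms. The $m=0$ term $b_3\rho_t^n\I_{\{\rho_t\|x\|\leq b_4\}}$ already gives a bound of the desired form in the diagonal regime $\rho_t\|x\|\leq b_4$. In the complementary regime I would use the $m=1$ term, which equals $b_3\rho_t^n\Lambda_t\bigl(\{y:\rho_t\|x-y\|\leq b_4\}\bigr)$. Rescaling $v=\rho_t y$ and invoking the reverse of \eqref{dens01}, this mass is bounded below by $\int_{\{\|v-\rho_t x\|\leq b_4,\,\|v\|>1\}}\|v\|^{-n-b}dv$, and once $\rho_t\|x\|\geq b_4+1$ one has $\|v\|\asymp\rho_t\|x\|$ throughout, so the integral is of the required order $(1+\rho_t\|x\|)^{-n-b}$.

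The chief technical step is the self-convolution estimate $K*K\leq c'K$ with a constant $c'$ uniform in $t\in(0,t_0]$. Uniformity is essentially automatic because $K(x)=\rho_t^n K_1(\rho_t x)$ with $K_1(y)=(1+\|y\|)^{-n-b}$ reduces the estimate to the single statement $K_1*K_1\leq c'K_1$ on $\rn$. The delicate point is to ensure that the two-region decomposition produces the sharp exponent $n+b$ rather than a slower algebraic rate; this is exactly where the strict positivity of $b$ enters and why the statement breaks down without it.
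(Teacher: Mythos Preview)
Your proposal is correct and follows essentially the same route as the paper: both arguments feed the compound kernel estimate of Theorem~\ref{t-main1} into the power kernel $q(v)=(1+\|v\|)^{-n-b}$, prove the self-convolution bound $q*q\leq Cq$ by the two-region split at $\|x\|/2$, iterate to control $\sum_m q^{*m}/m!$, and then absorb $f_{upper}$ into $q$ by the same splitting. For the lower bound the paper simply says ``the same argument applies'', and your explicit use of the $m=0$ and $m=1$ terms is exactly how that remark is meant to be unpacked.
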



 The proof of Theorem~\ref{Omey} relies on the results obtained in \cite{Om06}.
In order to make the presentation self-contained, we quote these results below.

  It is shown in \cite[Theorem~7, Corollary~11]{Om06}  that for a distribution function $G$ the conditions

\medskip

\textbf{G1}. For $\forall a\,, x\in \rn$, $a\geq 0$, $x\geq 0$, such that $\min_i x_i<\infty$,
$\underset{t\to\infty}{\lim} \frac{1-G(tx-a)}{1-G(tx)}=1$;

\textbf{G2}. All marginals $G_i$ of $G$ are sub-exponential  (i.e., $G_i\in \LL(\real)$),

\medskip

\noindent are equivalent to $G\in \LL(\rn)$, and imply
that for $x\geq 0$, $\min x_i<\infty$, and $a\in \rn$, $a\geq 0$, one has
\begin{equation}
\lim_{t\to \infty} \frac{1-H(tx-a)}{1-G(tx)}=\lambda, \label{H1}
\end{equation}
where
\begin{equation}
H(x)= \sum_{k=1}^\infty \frac{\lambda^k}{k!} G^{*k}(x), \quad \lambda\in (0,\infty). \label{H2}
\end{equation}
We also need \cite[Theorem~10]{Om06}, which  states that if the distribution function $G$ satisfies \textbf{G1} and \textbf{G2}, and the distribution functions $R$ and $F$ are such that
\begin{equation}
\lim_{t\to\infty} \frac{1-F(tx-a)}{1-G(tx)}=\alpha, \label{F1}
\end{equation}
\begin{equation}
\lim_{t\to\infty} \frac{1-R(tx-a)}{1-G(tx)}=\beta,\label{R1}
\end{equation}
for some $\alpha,\,\beta\in \real$, and any $a,\, x\in \rn$, $a,\, x\geq 0$,  $\min_i x_i<\infty$, then
\begin{equation}
\lim_{t\to \infty} \frac{1-R*F(tx-a)}{1-G(tx)}=\alpha+\beta. \label{H3}
\end{equation}
\begin{proof}[Proof of Theorem~\ref{Omey}]
 By \eqref{H2} we have
\begin{equation}
p_t(x)\leq \rho_t^n f_{upper}(x\rho_t) + c_1 \rho_t^n \int_{\|v\|\geq 1}
f_{upper}(x\rho_t -v) G(dv). \label{dens20}
\end{equation}
 Note that  for any $c>0$ the tail of a sub-exponential distribution in $\real$   decays  slower than $e^{-c |y|}$ as $|y|\to \infty$, (see \cite{Kl89}, also the comment in \cite{KK12a}), which   implies  that for any $c>0$  the tail of a  sub-exponential distribution in $\rn$ decays slower than $e^{-c\|x\|}$ as $\|x\|\to\infty$. Hence, for $R(x)=1-f_{upper}(x)$ we have \eqref{R1} with $\beta=0$. Thus, by sub-exponentiality of $G$ we have the relation  \eqref{H3} with $\alpha=1$, $\beta=0$, i.e.
\begin{align*}
\lim_{s\to \infty } \frac{\int_{\|v\|\geq 1} f(xs -v)dG(v)}{1-G(sx)}=1.
\end{align*}
Since  $\rho_t\to \infty$ as $t\to 0$, we finally derive  \eqref{bell1} for $t$ small enough.

Similar argument works for the lower bound: in this case we take $R(x)= 1-f_{lower}(x)$.
\end{proof}
\begin{proof}[Proof of Theorem~\ref{den}]
Let $q(v):= (1+\|v\|)^{-n-b} $, and put $Q(v):= \sum_{k=1}^\infty \tfrac{q^{*k}(v)}{k!}$, $v\in \rn$. By Theorem~\ref{t-main1} and \eqref{dens01} we get
\begin{equation}
p_t(x)\leq c \rho_t^n \Big(f_{upper}(x\rho_t)+ \int_\rn f_{upper}(x\rho_t -v)Q(v)dv\Big). \label{den1}
\end{equation}
Let us estimate $Q(v)$.  We have:
\begin{align*}
q^{*2} (w)& = \int_\rn \frac{1}{(1+\|v\|)^{n+b}(1+\|w-v\|)^{n+b}} dv
\\&
= \Big[\int_{\{\|w-v\|\leq 2^{-1} \|w\|\}} +\int_{\{\|w-v\|\geq 2^{-1} \|w\|\}} \Big]
 \frac{1}{(1+\|v\|)^{n+b}(1+\|w-v\|)^{n+b}} dv
 \\&
 =I_1+I_2.
\end{align*}
To estimate  $I_1$ observe that   if $\|w-v\|\leq 2^{-1} \|w\|$, then $\|w\|\leq \|v\|\leq 3/2 \|w\|$, or $1/2 \|w\|\leq \|v\|\leq \|w\|$,  implying $\tfrac{1}{1+\|v\|}\leq \tfrac{2}{2+\|w\|}$. Therefore,
$$
I_1\leq \Big( \frac{2}{2+\|w\|}\Big)^{n+b} \int_\real \frac{1}{(1+\|v\|)^{n+b}}dv\leq
c  \Big( \frac{2}{1+\|w\|}\Big)^{n+b}.
$$
Analogously, if $\|w-v\|\geq 2^{-1} \|w\|$, then $\tfrac{2}{2+\|w\|}\geq \tfrac{1}{1+\|w-v\|}$, implying
$$
I_2\leq \Big( \frac{2}{2+\|w\|}\Big)^{n+b} \int_\real \frac{1}{(1+\|v\|)^{n+b}}dv\leq
c  \Big( \frac{2}{1+\|w\|}\Big)^{n+b}.
$$
Thus, there exists a constant $C>0$ such that $q^{*2}(v) \leq C q(v)$. By induction, $q^{*k}(v)\leq C^{k-1} q(v)$, implying $Q(v)\leq c_1  q(v)$, $v\in \real$.
 Finally, observe that
\begin{align*}
\int_\real f_{upper}( x-v)Q(v)dv&= \Big[\int_{\|x-v\|\geq 2^{-1}\|x\|} + \int_{\|x-v\|\leq 2^{-1}\|x\|}\Big] f_{upper}(x-v)Q(v)dv
\\&
\leq c_2 f_{upper}(x/2) +c_3 Q(x)
\leq c_4 Q(x).
\end{align*}
Thus, we arrive at
$$
p_t(x)\leq c_5 \frac{\rho_t^n}{(1+\rho_t\|x\|)^{n+b}},
$$
which proves the first part of the theorem. The same argument applies for the lower bound.
\end{proof}

\section{Examples}\label{exam}

\begin{example}\label{exa1}

 Let $Z_t$ be an $\alpha$-stable process, $\alpha\in (0,2)$, with the L\'evy measure $\mu(du)=c_\alpha\|u\|^{-n-\alpha} du$, and the drift vector $b\in \rn$.   One can easily verify that condition \textbf{A} is satisfied, and
 $\rho_t=t^{-1/\alpha}$.  Applying  Theorem~\ref{den},  we arrive at
 $$
 p_t(x+bt)\asymp  t^{-n/\alpha}\wedge \frac{t}{\|x\|^{1+\alpha}}\asymp t^{-n/\alpha} f(t^{-1/\alpha}\|x\|), \quad x\in \rn, \quad t\in (0,t_0],
$$
where
\begin{equation}\label{eq-exa11}
f(z)=1\wedge z^{-\alpha-n}, \quad z>0,
\end{equation}
and  for the lower bound we used that due to the symmetry of the L\'evy measure we have $x_t=0$. Note that by the structure of $\mu$  the above estimates hold true for all $t>0$, $x\in \rn$, and coincides in the case $b=0$ with the well-known estimate for the transition probability density of a symmetric  $\alpha$-stable process.

 Observe that for $1<\alpha<2$ we have
 $$
 t^{-1/\alpha} \|x-tb\|\geq t^{-1/\alpha} - t^{1-1/\alpha}\|b\|\geq t^{-1/\alpha}\|x\|-c\|b\|,\quad t\in (0,t_0].
$$
Thus, for such $\alpha$ we arrived at
$$
p_t(x)\asymp   t^{-n/\alpha} f(t^{-1/\alpha} \|x\|), \quad t\in (0,t_0], \quad x\in \rn.
$$
\end{example}

\begin{example}\label{exa2} Consider a "discretized version" of an $\alpha$-stable L\'evy measure in $\rn$.
Let $m_{k,\upsilon}(dy)$ be a  uniform distribution   on a sphere $\mathbb{S}_{k,\upsilon}$ centered at $0$ with radius $2^{-k\upsilon}$, $\upsilon>0$, $k\in \mathbb{Z}$.
Consider  a L\'evy process with characteristic exponent of the form \eqref{psi}, where
 $$
\mu(dy)=\sum_{k=-\infty}^\infty 2^{k\gamma}m_{k,v}(dy), \quad 0<\gamma<2\upsilon,
$$
 and  some drift coefficient $a\in \rn$.  Let us check that in this case $\psi^U(\xi) \asymp \psi^L(\xi)\asymp \|\xi\|^\alpha$, where  $\alpha=\gamma/\upsilon$.

Let $k_0:= \upsilon^{-1} \log_2\|\xi\|$. We have
 \begin{align*}
 \psi^U(\xi)&\leq \int_\rn
 (\|\xi\|^2\|y\|^2\wedge 1) \mu(dy)
 \\&
 =  \|\xi\|^2\int_{\|y\|\leq /\|\xi\|}  \|y \|^2 \mu(dy)+ \int_{\|y\|> 1/\|\xi\|}\mu(dy)
 \\&
 = \|\xi\|^2 \sum_{k\geq k_0}  2^{\gamma k-2k\upsilon}+ c_1 \sum_{k\leq k_0} 2^{\gamma k}
 \\&
 \leq  \|\xi\|^2  2^{k_0 (\gamma -2v)} \sum_{k\geq k_0} 2^{-(k-k_0)(2\upsilon-\gamma)} +c_1 + 2^{\gamma k_0} \frac{1-2^{-\gamma k_0}}{1-2^{-\gamma}}
 \\&
 \leq \frac{2^{2\upsilon-\gamma}}{2^{2\upsilon-\gamma}-1}  \|\xi\|^2 2^{\frac{2\upsilon-\gamma}{\upsilon}\log_2 \|\xi\|}+ c_2 2^{\frac{\gamma}{\upsilon} \log_2 \|\xi\|}
 \leq c_3\|\xi\|^\alpha.
 \end{align*}
The above calculations and the inequality  $(1-\cos 1)\psi^L(\xi)\leq \int_\rn (1-\cos (\xi \cdot y))\mu(dy)$  imply that
\begin{align*}
\psi^L(\xi)&\leq c_4 \psi^U(\xi) \leq c_5 \|\xi\|^\alpha.
\end{align*}
For the lower bound we have
\begin{align*}
\psi^L(\xi) &\geq \int_{\|y\|\geq 1/\|\xi\|} |\xi\cdot y|^2 \mu(dy)\geq m_{k_0,v}\{ l\in \mathbb{S}_{k_0,\upsilon}: \, |\cos(l_\xi \cdot l)|>\epsilon\}
\|\xi\|^2 2^{k_0 (\gamma-2\upsilon)}
\\&
= c_6 \|\xi\|^\alpha,
\end{align*}
where $l_{\xi}:= \xi/\|\xi\|$, implying
$$
\inf_{\|l\|=1} \psi^L(\|\xi\| l ) \geq  c\|\xi\|^\alpha.
$$
Thus, condition \textbf{A} is satisfied, and $\psi^L(\xi)\asymp \psi^U(\xi)\asymp \|\xi\|^\alpha$, which in turn gives
 $\rho_t \asymp t^{-1/\alpha}$.

 Note that for $\|x\|>1$ we have
\begin{align*}
 t\mu\Big(\{u: \rho_t\| u\|>\|x\|\}\Big) =t \sum_{n \leq n(t,x)} 2^{\gamma n} \leq  Ct 2^{\tfrac{\gamma}{\upsilon} \log_2 (\rho_t/\|x\|)}
 = C \|x\|^{-\gamma/\upsilon}= C\|x\|^{-\alpha},
\end{align*}
 where $n(t,x):= \tfrac{1}{\upsilon}\log_2 (\rho_t/\|x\|)$.  Therefore, condition \eqref{dens02} of Theorem~\ref{Omey} holds true with $1-G(x)=\|x\|^{-\alpha}$, $\|x\|\geq 1$. By this theorem we have the following estimate for the respective transition probability density:
 \begin{equation}
p_t(x+at) \leq c_1 t^{-n/\alpha} f(t^{-1/\alpha}\|x\|)\label{eq-exa2}
\end{equation}
where
\begin{equation}
f(z)=1\wedge z^{-\alpha}, \quad z>0.\label{eq-exa21}
\end{equation}
However, as one may notice,  such upper estimate is informative only in the case $n=1$ and $1<\alpha<2$, see  \cite{KK12a} for the detailed analysis. In the  other cases the upper bound is not integrable!   On the other hand,  Theorem~\ref{t-main1}  together with Proposition~\ref{aux-theo-sym} provides that the transition probability density satisfies the upper compound kernel estimates with parameters $( t^{-1/\alpha}, f_{upper},  t^{-1/\alpha}, \Lambda_t)$, with
$$
f_{upper}(x)=b_1e^{-b_2 \|x\|\log(1+\|x\\)}, \quad \text{and} \quad \Lambda_t(du)= t \I_{\{\|u\|\geq  t^{1/\alpha}\}}\mu(du).
   $$
In this case the obtained upper bound is integrable.
\end{example}

\begin{remark} The above example  illustrates that  even  if the (re-scaled)  L\'evy  measure can be dominated by a reasonably good function, the explicit upper  estimate  obtained in Theorem~\ref{Omey} can be extremely inexact.  Heuristically, the condition \eqref{dens02} is imposed on the tail of the  re-scaled \emph{measure}, which suppresses its  intrinsic  behaviour.  See, however, \cite{KS13b} for another approach in a similar situation.   On the other hand, the condition on the behaviour of the \emph{density} can lead to  adequate results, as we saw in Example~\ref{exa1}. Possibly, one can modify the  assumption Theorem~\ref{Omey} and get more reasonable explicit estimates, but in fact it is not needed, since the compound kernel estimates obtained in Theorem~\ref{t-main1} already contain the information, sufficient for  many applications, see  \cite{KK13a} and \cite{KK13b}.
\end{remark}

\textbf{Acknowledgement.} 
The author is very grateful to Alexei Kulik for fruitful discussions.  The DFG Grant Schi~419/8-1, and   the Scholarship of the President of Ukraine for young scientists (2011-2013) are gratefully acknowledged.

\end{document}